\numberwithin{equation}{section}
\theoremstyle{plain}
\newtheorem{theorem}{Theorem}[section]
\newtheorem{lemma}[theorem]{Lemma}
\newtheorem{corollary}[theorem]{Corollary}
\theoremstyle{definition}
\theoremstyle{remark}
\newtheorem{example}[theorem]{Example}
\newcommand{\C}{\mathbb{C}}
\newcommand{\Z}{\mathbb{Z}}
\renewcommand{\P}{\mathbb{P}}
\newcommand{\isheaf}{\mathscr{I}}
\newcommand{\curr}{\mathscr{C}}
\newcommand{\holo}{\mathscr{O}}
\newcommand{\opens}{\mathscr{U}}
\newcommand{\red}{\textnormal{red}}
\renewcommand{\dbar}{\bar{\partial}}
\DeclareMathOperator{\codim}{codim}
\DeclareMathOperator{\End}{End}
\DeclareMathOperator{\Hom}{Hom}
\DeclareMathOperator{\id}{id}
\DeclareMathOperator{\im}{im}
\title{Polynomial interpolation and residue currents}
\author{Jimmy Johansson}
\address{Department of Mathematical Sciences,
Chalmers University of Technology and the University of Gothenburg,
Gothenburg SE-412 96, Sweden}
\email{jimjoh@chalmers.se}
\subjclass[2010]{32A27, 32C30}
\date{January 20, 2021}
\begin{document}
\begin{abstract}
We show that a global holomorphic section of $\holo(d)$ restricted to a closed complex subspace $X \subset \P^n$ has an interpolant if and only if it satisfies a set of moment conditions that involves a residue current associated with a locally free resolution of $\holo_X$. When $X$ is a finite set of points in $\C^n \subset \P^n$ this can be interpreted as a set of linear conditions that a function on $X$ has to satisfy in order to have a polynomial interpolant of degree at most $d$.
\end{abstract}
\maketitle
\section{Introduction}
\label{section:introduction}
Let $i: X \hookrightarrow \C^n$ be a subvariety or complex subspace whose underlying space, $X_\red$, is a finite set of points $\{ p_0, \dots, p_r \} \subset \C^n$. Let $g$ be a holomorphic function on $X$, i.e., a global holomorphic section of $\holo_X$, and let $G \in \C[\zeta_1,\dots,\zeta_n]$ be a polynomial. We say that $G$ interpolates $g$ if the pull-back of $G$ to $X$ equals $g$, i.e., $i^*G = g$.

If $X$ is reduced, then a holomorphic function $g$ on $X$ is just a function from $X_\red = \{ p_0, \dots, p_r \}$ to $\C$, and we have that $G$ interpolates $g$ if $G(p_j) = g(p_j)$ for each $j = 0, \dots, r$. In the univariate case this is referred to as Lagrange interpolation. If $X$ is not reduced, then at each point $p_j$, $G$ also has to satisfy some conditions on its derivatives. In the univariate case this is referred to as Hermite interpolation, see Example~\ref{ex:hermite}.

The motivating question for this note is the following. What are the necessary and sufficient conditions on $g$ for the existence of an interpolant of degree at most $d$?

Let $A_X$ denote the vector space of holomorphic functions on $X$, i.e., $A_X = H^0(\C^n,\holo_X)$. Since the set of holomorphic functions on $X$ that have an interpolant of degree at most $d$ is a linear subspace of $A_X$, we have that a function $g \in A_X$ has an interpolant of degree at most $d$ if and only if it satisfies a finite set of linear conditions. In this note we will show how these linear conditions can be explicitly realized as a set of moment conditions that involves a so-called residue current associated with a locally free resolution of $\holo_X$.

Recall that since $X_\red$ is a finite set of points, $X$ can be viewed as a closed complex subspace of $\P^n$, and we have that polynomials of degree at most $d$ on $\C^n$ naturally correspond to global holomorphic sections of the line bundle $\holo(d) \rightarrow \P^n$ via $d$-homogenization. This motivates the following more general notion of interpolation that we shall consider in this note. Let $i: X \hookrightarrow \P^n$ be a closed complex subspace of arbitrary dimension. Let $\Phi$ and $\varphi$ be global holomorphic sections of $\holo(d)$ and $\holo_X(d) = i^*\holo(d)$, respectively. We say that $\Phi$ interpolates $\varphi$ if $i^*\Phi = \varphi$.

From a minimal graded free resolution of the homogeneous coordinate ring of $X$, $S_X$, we obtain a locally free resolution of $\holo_X$ of the form
\begin{equation}
\label{eq:locally-free-res-pn}
	0 \longrightarrow
	\holo(E_n) \overset{f_n}{\longrightarrow} \dots
	\overset{f_2}{\longrightarrow} \holo(E_1) \overset{f_1}{\longrightarrow} \holo_{\P^n}
	\longrightarrow \holo_X \longrightarrow 0
\end{equation}
where $E_k = \bigoplus_\ell \holo(-\ell)^{\beta_{k,\ell}}$, see \cite{eisenbud} and \cite{aw}*{Section~6}. The $\beta_{k,\ell}$ are referred to as the \emph{graded Betti numbers} of $S_X$. We equip the $E_k$ with the natural Hermitian metrics. In \cite{aw}, Andersson and Wulcan showed that with (\ref{eq:locally-free-res-pn}), one can associate a residue current $R$ that generalizes the classical Coleff--Herrera product \cite{ch}, see Section~\ref{section:residue-currents}. It can be written as $R = \sum_{k,\ell} R_{k,\ell}$, where each $R_{k,\ell}$ is an $\holo(-\ell)^{\beta_{k,\ell}}$-valued $(0,k)$-current. In \cite{aw2}, the same authors proved a result which as a special case gives a cohomological condition in terms of the current $R$ for when $\Phi$ interpolates $\varphi$. In Section~\ref{section:interpolation} we will show that in our setting this condition amounts to the following set of moment conditions.
\begin{theorem}
\label{thm:moment-conditions}
Let $X \subset \P^n$ be a closed complex subspace, and let $R$ be the residue current associated with (\ref{eq:locally-free-res-pn}). Moreover, let $\omega$ be a nonvanishing holomorphic $\holo(n+1)$-valued $n$-form. A global holomorphic section $\varphi$ of $\holo_X(d)$ has an interpolant if and only if for each $\ell$ it holds that
\begin{equation}
\label{eq:integral-condition}
	\int_{\P^n} R_{n,\ell} \varphi \wedge h \omega = 0
\end{equation}
for all global holomorphic sections $h$ of $\holo(\ell-d-n-1)$.
\end{theorem}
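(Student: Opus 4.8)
The plan is to take the cohomological criterion of Andersson and Wulcan from \cite{aw2} as the starting point and to reinterpret it through Serre duality on $\P^n$. In the present situation that criterion asserts that $\varphi$ admits an interpolant precisely when the $\holo(E_n)(d)$-valued $(0,n)$-current $R_n\varphi = \sum_\ell R_{n,\ell}\varphi$ represents the zero class in the Dolbeault cohomology group $H^{0,n}_{\dbar}(\P^n,\holo(E_n)(d))$. Here $R_n\varphi$ is formed by multiplying the pseudomeromorphic current $R_n$ by an arbitrary smooth extension of $\varphi$ to $\P^n$, and the first point I would verify is that this product is independent of the extension and is $\dbar$-closed; both rest on the duality principle for $R$, namely that $R$ annihilates the ideal sheaf $\isheaf_X$, so that altering the extension by a section vanishing on $X$ leaves $R_n\varphi$ unchanged. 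The reason only the top component $R_n$ enters is that the intermediate twisted sheaves $\holo(E_k)(d) = \bigoplus_\ell \holo(d-\ell)^{\beta_{k,\ell}}$ satisfy $H^j(\P^n,\holo(E_k)(d)) = 0$ for $0 < j < n$ by Bott vanishing; the lower currents $R_k$ then only serve to solve the intermediate $\dbar$-equations realizing the successive connecting homomorphisms, and the obstruction collapses onto its top Dolbeault component.

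With this in hand I would identify the relevant cohomology explicitly. By the Dolbeault isomorphism,
\[
	H^{0,n}_{\dbar}(\P^n,\holo(E_n)(d)) \cong H^n(\P^n,\holo(E_n)(d)) \cong \bigoplus_\ell H^n(\P^n,\holo(d-\ell))^{\beta_{n,\ell}}.
\]
Serre duality on $\P^n$ gives $H^n(\P^n,\holo(d-\ell))^* \cong H^0(\P^n,\holo(\ell-d-n-1))$, so the space dual to $H^n(\P^n,\holo(E_n)(d))$ is exactly $\bigoplus_\ell H^0(\P^n,\holo(\ell-d-n-1))^{\beta_{n,\ell}}$, which is the space of tuples of the sections $h$ appearing in the statement. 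Nondegeneracy of the Serre pairing then shows that $[R_n\varphi]$ vanishes if and only if its pairing against every such $h$, for every $\ell$, is zero.

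It remains to identify this pairing with the integral (\ref{eq:integral-condition}). The fixed nonvanishing holomorphic $\holo(n+1)$-valued $n$-form $\omega$ trivializes $\Omega^n_{\P^n}(n+1) \cong \holo_{\P^n}$; hence for a section $h$ of $\holo(\ell-d-n-1)$ the product $R_{n,\ell}\varphi \wedge h\omega$ is an $(n,n)$-current valued in the trivial bundle $\holo(d-\ell)^{\beta_{n,\ell}} \otimes \holo(\ell-d-n-1) \otimes \holo(n+1) \cong \holo_{\P^n}^{\beta_{n,\ell}}$, so its integral over $\P^n$ lies in $\C^{\beta_{n,\ell}}$. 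This integral represents, component by component and up to a universal constant, the Serre pairing of the class $[R_{n,\ell}\varphi]$ with $h$; it is well defined and depends only on the class because $h\omega$ is holomorphic, hence $\dbar$-closed, so by Stokes' theorem every $\dbar$-exact representative integrates to zero. As $h$ ranges over all of $H^0(\P^n,\holo(\ell-d-n-1))$ and $\ell$ varies, these vector-valued conditions exhaust the full Serre pairing, and combining the three steps gives the stated equivalence.

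The step I expect to be the main obstacle is the transition in the first paragraph: rigorously transporting the Andersson--Wulcan condition into the clean statement that the genuine obstruction $\delta\varphi \in H^1(\P^n,\isheaf_X(d))$ to extending $\varphi$, coming from the ideal sheaf sequence, is carried under the chain of connecting isomorphisms furnished by (\ref{eq:locally-free-res-pn}) and Bott vanishing to the explicit class $[R_n\varphi]$. The delicate ingredients are the well-definedness and $\dbar$-closedness of $R_n\varphi$ when $X$ is non-reduced, together with the verification that the lower-order currents $R_k$ genuinely implement the connecting maps, so that the single top current $R_n\varphi$ records the entire obstruction; once these are in place, the Serre-duality bookkeeping and the trivialization by $\omega$ are routine.
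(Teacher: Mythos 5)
Your proposal follows essentially the same route as the paper: the Andersson--Wulcan criterion plus the vanishing of $H^{0,k}(\P^n,E_k\otimes\holo(d))$ for $1\leq k\leq n-1$ reduces interpolation to the $\dbar$-exactness of $R_n\varphi$, and Serre duality together with the trivialization $\Omega^n_{\P^n}(n+1)\cong\holo_{\P^n}$ by $\omega$ converts that into the stated moment conditions. The only small correction: $R\varphi$ should be defined via \emph{local holomorphic} extensions of $\varphi$ (independence then follows from the duality principle, since the difference of two such extensions lies in $\isheaf_X\otimes\holo(d)$), not via an arbitrary smooth extension, for which the duality principle does not directly apply.
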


Recall that the \emph{interpolation degree} of $X$ is defined as
\[
	\inf \{ d: \text{all global holomorphic sections of $\holo_X(d)$ has an interpolant} \}.
\]
In particular, if $X_\red$ is a finite set of points in $\C^n$, then the interpolation degree of $X$ is the smallest number $d$ such that any $g \in A_X$ has an interpolant of degree at most $d$. Define
\begin{equation}
\label{eq:tk-sx}
	t_k(S_X) = \sup \{ \ell: \beta_{k,\ell} \neq 0 \}.
\end{equation}
(We use the convention that the supremum of the empty set is $-\infty$.) As a consequence of Theorem~\ref{thm:moment-conditions} we get the following bound of the interpolation degree.
\begin{corollary}
\label{cor:interpolation-degree}
Let $X \subset \P^n$ be a closed complex subspace with homogeneous coordinate ring $S_X$. The interpolation degree of $X$ is less than or equal to $t_n(S_X)-n$.
\end{corollary}
It can be shown by purely algebraic means that the interpolation degree of $X$ is in fact equal to $t_n(S_X)-n$, see e.g. \cite{johansson}*{Corollary~1.6}. If $X_\red$ consists of a finite set of points, then it can be shown that $t_n(S_X)-n$ is equal to the Castelnuovo--Mumford regularity of $S_X$, see \cite{eisenbud}*{Exercise 4E.5}, and the statement in this case is Theorem~4.1 in \cite{eisenbud}.

In Section~\ref{section:polynomial-interpolation} we will consider the case when $X_\red$ is a finite set of points in $\C^n$. In this case the corresponding versions of Theorem~\ref{thm:moment-conditions} and Corollary~\ref{cor:interpolation-degree} first appeared in \cite{standar}. We will consider some examples where we explicitly write down the conditions for when $g \in A_X$ has an interpolant of degree at most $d$. In particular, we will obtain the precise conditions for when the Hermite interpolation problem has a solution.
\section{Residue currents}
\label{section:residue-currents}
Let $f$ be a holomorphic function in an open set in $\C^n$. Let $\xi$ be a smooth $(n,n)$-form with compact support. In \cite{hl}, using Hironaka's desingularization theorem, Herrera and Lieberman proved that the limit
\begin{equation}
\label{eq:principal-value}
	\lim_{\epsilon \rightarrow 0}
	\int_{|f| > \epsilon} \frac{\xi}{f}
\end{equation}
exists. Thus (\ref{eq:principal-value}) defines a current known as the principal value current, which is denoted by $[1/f]$. The residue current $R^f$ of $f$ is the $(0,1)$-current $\dbar [1/f]$. It is easy to see that $R^f$ has its support on $V(f) = f^{-1}(0)$, and that it satisfies the following \emph{duality principle}: A holomorphic function $\Phi$ belongs to the ideal $(f)$ if and only if $R^f \Phi = 0$.
\begin{example}
Let $\zeta_0 \in \C$. We have that the action of $\dbar [1/(\zeta-\zeta_0)]$ on a test form $\xi(\zeta) d \zeta$ is given by
\begin{equation}
\label{eq:dbar-zk}
	\left\langle \dbar
	\left[ \frac{1}{\zeta-\zeta_0} \right],
	\xi(\zeta) d\zeta \right\rangle =
	2 \pi i \xi(\zeta_0).
\end{equation}
\qed
\end{example}

\subsection{Residue currents associated with generically exact complexes}
\label{section:aw-current}
We will now consider a generalization of the above construction due to Andersson and Wulcan. Consider a generically exact complex of Hermitian holomorphic vector bundles over a complex manifold $Y$ of dimension $n$,
\begin{equation}
\label{eq:bundle-complex}
	0 \longrightarrow
	E_n \overset{f_n}{\longrightarrow} \dots
	\overset{f_2}{\longrightarrow} E_1 \overset{f_1}{\longrightarrow} E_0
	\longrightarrow 0,
\end{equation}
i.e., a complex that is exact outside an analytic variety $Z \subset Y$ of positive codimension. The vector bundle $E = \bigoplus_k E_k$ has a natural superbundle structure, i.e., a $\Z_2$-grading, $E = E_+ \oplus E_-$, where $E_+ = \bigoplus_k E_{2k}$ and $E_- = \bigoplus_k E_{2k+1}$, which we shall refer to as the subspaces of even and odd elements, respectively. This induces a $\Z_2$-grading on the sheaf of $E$-valued currents $\curr(E)$; if $\omega \otimes \xi$ is an $E$-valued current, where $\omega$ is a current and $\xi$ is a smooth section of $E$, then the degree of $\omega \otimes \xi$ is the sum of the degree of $\xi$ and the current degree of $\omega$ modulo 2.

We say that an endomorphism on $E$ is even (resp.\ odd) if it preserves (resp.\ switches) the degree. If $\alpha$ is a smooth section of $\End E$, then it defines a map on $\curr(E)$ via
\[
	\alpha(\omega \otimes \xi) =
	(-1)^{(\deg \alpha)(\deg \omega)} \omega \otimes \alpha(\xi),
\]
where $\omega$ is a current and $\xi$ is a smooth section of $E$. In particular, the map $f = \sum_{k=1}^n f_k$ defines an odd map on $\curr(E)$. We define an odd map on $\curr(E)$, $\nabla = f-\dbar$, which, since $f$ and $\dbar$ anti-commute, satisfies $\nabla^2 = 0$. The map $\nabla$ extends to an odd map on $\curr(\End E)$ via Leibniz's rule,
\[
	\nabla(\alpha \xi) = (\nabla \alpha) \xi + (-1)^{\deg \alpha} \alpha \nabla \xi.
\]

In \cite{aw}, Andersson and Wulcan constructed $\End E$-valued currents
\[
	U = \sum_\ell U^\ell =
	\sum_\ell \sum_{k \geq \ell+1} U_k^\ell,
\]
and
\[
	R = \sum_\ell R^\ell =
	\sum_\ell \sum_{k \geq \ell+1} R_k^\ell,
\]
where $U_k^\ell$ and $R_k^\ell$ are $\Hom(E_\ell, E_k)$-valued currents of bidegree $(0, k-\ell-1)$ and $(0, k-\ell)$, respectively, which satisfy
\begin{equation}
\label{eq:nabla-U-R}
	\nabla U = \id_E - R, \quad \nabla R = 0.
\end{equation}
The current $R$ is referred to as the residue current associated with (\ref{eq:bundle-complex}) and it has its support on $Z$.

Suppose that the complex of locally free sheaves corresponding to (\ref{eq:bundle-complex}),
\begin{equation}
\label{eq:hermitian-res}
	0 \longrightarrow
	\holo(E_n) \overset{f_n}{\longrightarrow} \dots
	\overset{f_2}{\longrightarrow} \holo(E_1) \overset{f_1}{\longrightarrow} \holo(E_0),
\end{equation}
is exact. When the $E_k$ are equipped with Hermitian metrics, we shall refer to (\ref{eq:hermitian-res}) as a \emph{Hermitian resolution} of the sheaf $\holo(E_0) / \im f_1$. In this case it holds that $R^\ell = 0$ if $\ell \geq 1$, and henceforth we shall write $R_k$ for $R_k^0$. Moreover, we have that $R$ satisfies the following properties:

\emph{Duality principle}: A holomorphic section $\Phi$ of $E_0$ belongs to $\im f_1$ if and only if $R \Phi = 0$.

\emph{Dimension principle}: If $\codim Z > k$, then $R_k = 0$.

Note that the second equality in (\ref{eq:nabla-U-R}) is equivalent to
\begin{align}
	f_1 R_1 &= 0, \label{eq:nabla-R-1} \\
	f_{k+1} R_{k+1} - \dbar R_k &= 0, \quad 1 \leq k \leq n-1, \label{eq:nabla-R-2} \\
	\dbar R_n &= 0. \label{eq:nabla-R-3}
\end{align}

Let $i: X \hookrightarrow Y$ be a closed complex subspace with ideal sheaf $\isheaf_X$, and suppose that $\holo_X = i^* \holo_Y$, which we identify with $\holo_Y / \isheaf_X$, has a Hermitian resolution of the form
\begin{equation}
\label{eq:locally-free-res}
	0 \longrightarrow
	\holo(E_n) \overset{f_n}{\longrightarrow} \dots
	\overset{f_2}{\longrightarrow} \holo(E_1) \overset{f_1}{\longrightarrow} \holo_Y
	\longrightarrow \holo_X \longrightarrow 0,
\end{equation}
cf.\ (\ref{eq:hermitian-res}) where $E_0$ is the trivial line bundle. For the associated residue current $R = R_1 + \dots + R_n$, we can view each $R_k$ as an $E_k$-valued $(0,k)$-current. Since $\im f_1 = \isheaf_X$, we have that $i^* \Phi = 0$ if and only if $R \Phi = 0$ by the duality principle. More generally, let $L \rightarrow Y$ be a holomorphic line bundle. If we equip $L$ with a Hermitian metric, then we obtain a Hermitian resolution of $i^* L = \holo_X \otimes L$ by tensoring (\ref{eq:locally-free-res}) with $L$, and we have that $R$ is the associated residue current with this resolution as well.
\subsection{The Coleff--Herrera product}
\label{section:ch}
Let $f = (f_1, \dots, f_p): \C^n \rightarrow \C^p$ be a holomorphic mapping such that $V(f) = f^{-1}(0)$ has codimension $p$. In \cite{ch} Coleff and Herrera gave meaning to the product
\begin{equation}
\label{eq:ch-product}
	\mu^f =
	\dbar \left[ \frac{1}{f_1} \right] \wedge \dots \wedge
	\dbar \left[ \frac{1}{f_p} \right],
\end{equation}
which is known as the \emph{Coleff--Herrera product}. In particular, if each $f_j$ only depends on $\zeta_j$, then (\ref{eq:ch-product}) is just the tensor product of the one-variable currents $\dbar [1/f_j]$ described above. The current $\mu^f$ is $\dbar$-closed, has support $V(f)$, and is anti-commuting in the $f_j$. Moreover, $\mu^f$ satisfies the duality principle, i.e., $\mu^f \Phi= 0$ if and only if $\Phi \in \isheaf(f)$, where $\isheaf(f)$ is the ideal sheaf generated by $f$.

Let $H \rightarrow Y$ be a holomorphic Hermitian vector bundle of rank $p$, and let $f$ be a holomorphic section of the dual bundle $H^*$. Let $E_k = \bigwedge^k H$, and define $\delta_k: E_k \rightarrow E_{k-1}$ as interior multiplication by $f$. This gives a generically exact complex (\ref{eq:bundle-complex}). Suppose $f = f_1 e_1^* + \dots + f_p e_p^*$ in some local holomorphic frame $e_j^*$ for $H^*$. If $\codim f^{-1}(0) = p$, then the corresponding complex of sheaves is a Hermitian resolution of $\holo_Y/\isheaf(f)$ known as the \emph{Koszul complex}, and it was proven in \cite{mats} that the associated residue current is given by $R = R_p = \mu^f e_1 \wedge \dots \wedge e_p$.
\subsection{A comparison formula for residue currents}
\label{section:comparison-formula}
We have the following \emph{comparison formula} for residue currents, see Theorem~1.3 and Corollary~4.7 in \cite{larkang}. Let $X \subset X'$ be complex subspaces of codimension $p$ of $Y$. Suppose that there exist Hermitian resolutions of length $p$ of $\holo_X$ and $\holo_{X'}$, respectively, and let $R$ and $R'$ be the associated residue currents. Moreover, suppose that there exists a map of complexes
\[
	\begin{tikzcd}
	0 \ar{r} &[-10pt] \holo(E'_p) \ar{d}{\psi_p} \ar{r}{f'_p} & \cdots \ar{r}{f'_2} &
	\holo(E'_1) \ar{d}{\psi_1} \ar{r}{f'_1} & \holo_Y \ar{d}{\id}
	\ar{r} & \holo_{X'} \ar{d} \ar{r} &[-10pt] 0 \\
	0 \ar{r} & \holo(E_p) \ar{r}{f_p} & \cdots \ar{r}{f_2} & \holo(E_1)
	\ar{r}{f_1} & \holo_Y \ar{r} & \holo_X \ar{r} & 0
	\end{tikzcd}
\]
Then $R_p = \psi_p R'_p$.
\section{Interpolation and residue currents}
\label{section:interpolation}
Let $Y$ be a complex manifold of dimension $n$, and let $i: X \hookrightarrow Y$ be a closed complex subspace. Let $L \rightarrow Y$ be a holomorphic line bundle, and let $\Phi$ and $\varphi$ be global holomorphic sections of $L$ and $i^* L$, respectively. We say that $\Phi$ interpolates $\varphi$ if $i^* \Phi = \varphi$.

Suppose that there exists a Hermitian resolution of $\holo_X$ of the form (\ref{eq:locally-free-res}), and let $R$ denote the associated residue current. For each point $x \in Y$ there is a neighborhood $\opens$ and a holomorphic section $\widetilde{\varphi}$ of $L$ such that $i^* \widetilde{\varphi} = \varphi$ on $\opens$. We define the current $R \varphi$ on $Y$ locally as $R \widetilde{\varphi}$. This is well-defined since if $\widetilde{\varphi}'$ is another section such that $i^* \widetilde{\varphi}' = \varphi$, then $R (\widetilde{\varphi} - \widetilde{\varphi}') = 0$ by the duality principle.

We have the following result which follows immediately as a special case of Lemma 4.5 (ii) in \cite{aw2}.
\begin{lemma}
\label{lemma:nabla-exact}
Let $\Phi$ and $\varphi$ be global holomorphic sections of $L$ and $i^* L$, respectively. Then $\Phi$ interpolates $\varphi$ if and only if there exists a current $w$ such that $\Phi - R \varphi = \nabla w$.
\end{lemma}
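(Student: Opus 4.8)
The plan is to convert the interpolation condition into a statement about the current $R$ alone, and then to realise the exactness condition as a manifestation of the Andersson--Wulcan identity $\nabla U = \id_E - R$. The first thing I would record is the reformulation
\[ i^*\Phi = \varphi \iff R\Phi = R\varphi, \]
where $R\Phi$ denotes $R$ applied to the global section $\Phi$. This follows from the duality principle: for any local holomorphic extension $\widetilde\varphi$ of $\varphi$ one has $i^*(\Phi - \widetilde\varphi) = i^*\Phi - \varphi$, so $\Phi - \widetilde\varphi \in \im f_1$ --- equivalently $i^*\Phi = \varphi$ --- exactly when $R(\Phi - \widetilde\varphi) = 0$, i.e.\ $R\Phi = R\varphi$. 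In particular, if $\Phi$ interpolates $\varphi$ then $\Phi$ is itself a global extension, so $R\varphi = R\Phi$.

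For the direction $(\Rightarrow)$ I would argue as follows. Since $\Phi$ is a holomorphic section of $E_0 \otimes L$ and $f$ has no component mapping out of $E_0$, we have $f\Phi = 0$ and $\dbar\Phi = 0$, so $\nabla\Phi = 0$. Feeding this into the Leibniz rule together with $\nabla U = \id_E - R$ gives the unconditional identity
\[ \nabla(U\Phi) = (\nabla U)\Phi = \Phi - R\Phi, \]
and under the interpolation assumption $R\Phi = R\varphi$, so $w = U\Phi$ serves as the required potential.

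For $(\Leftarrow)$, assuming $\Phi - R\varphi = \nabla w$, I would subtract the identity $\Phi - R\Phi = \nabla(U\Phi)$ just obtained to get $R\Phi - R\varphi = \nabla(w - U\Phi)$. By the reformulation above it then suffices to show that the $\nabla$-exactness of the globally defined current $R\Phi - R\varphi$ forces it to vanish. Locally this current equals $R\Psi$ with $\Psi = \Phi - \widetilde\varphi$ holomorphic, so everything comes down to the vanishing statement: \emph{a $\nabla$-exact current of the form $R\Psi$, with $\Psi$ holomorphic, is zero.}

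This vanishing is the step I expect to be the main obstacle. In special cases it is accessible directly: when $X_\red$ is a finite set of points only $R_n$ is nonzero by the dimension principle, $\nabla$-exactness of $R_n\Psi$ reduces to $\dbar$-exactness of a $(0,n)$-current supported at points, and testing against holomorphic top-forms --- where the pairing vanishes by Stokes --- combined with the pointwise structure of $R_n$ forces $R_n\Psi = 0$. For general $X$, however, the argument must use the fine structure of $R$, namely that it is pseudomeromorphic and enjoys the standard extension property; this is precisely the input encapsulated in Lemma 4.5 of \cite{aw2}, and it is the one part of the proof I would invoke rather than reconstruct.
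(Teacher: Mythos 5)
Your proposal is correct, and it actually contains more argument than the paper does: the paper's entire ``proof'' of this lemma is the citation of Lemma~4.5~(ii) in \cite{aw2}, whereas you reconstruct the forward implication in full (the identity $\nabla(U\Phi)=(\nabla U)\Phi=\Phi-R\Phi$, valid because $\Phi$ is holomorphic and $E_0$-valued so $\nabla\Phi=0$, together with the observation that $R\Phi=R\varphi$ when $\Phi$ is itself an extension) and correctly reduce the converse to the vanishing of a $\nabla$-exact current of the form $R\Psi$ with $\Psi$ holomorphic. Two comments on that remaining step, which is where all the content of the cited lemma lives. First, your diagnosis of what it requires is slightly off: the standard proof does not invoke pseudomeromorphy or the standard extension property directly. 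Rather, from $R\Psi=\nabla v$ and $\nabla(U\Psi)=\Psi-R\Psi$ one gets $\Psi=\nabla(v+U\Psi)$; decomposing $u=v+U\Psi$ into components yields $\Psi=f_1u_1$, $\dbar u_k=f_{k+1}u_{k+1}$ and $\dbar u_n=0$, and on a small pseudoconvex neighborhood one solves these $\dbar$-equations successively for currents (using $f_kf_{k+1}=0$ and the exactness of $\holo(E_\bullet)$) to produce a \emph{holomorphic} $\psi$ with $f_1\psi=\Psi$; then $R\Psi=0$ by the duality principle. So the inputs are local solvability of $\dbar$ and the duality principle, both already granted in Section~2, and with this paragraph added your argument would be self-contained rather than resting on the citation. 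Second, your sketch for the zero-dimensional case tacitly uses that $R_n$ is a Coleff--Herrera-type current determined by its action on holomorphic test forms; that is true but would need justification. Neither point is a gap relative to the paper, since you ultimately invoke the same Lemma~4.5 that the paper cites for the whole statement.
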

In other words, $\varphi$ has an interpolant if and only if there exist currents $w_1, \dots, w_n$ such that $\dbar w_n = R_n \varphi$, and
\begin{equation}
\label{eq:dbar-xi}
	\dbar w_k = f_{k+1} w_{k+1} + R_k \varphi,
	\quad 1 \leq k \leq n-1.
\end{equation}
Moreover, in this case an interpolant of $\varphi$ is given by $\Phi = f_1 w_1$. Note that $\Phi$ is holomorphic since $\dbar \Phi = -f_1 \dbar w_1 = -f_1(f_2 w_2 + R_1 \varphi) = -(f_1 R_1) \varphi = 0$. Here the last equality follows from (\ref{eq:nabla-R-1}).

Let us now consider interpolation on $Y = \P^n$ with respect to the line bundle $L = \holo(d)$. Recall that there is a Hermitian resolution of $\holo_X$ of the form (\ref{eq:locally-free-res-pn}). We write $R_{k,\ell}$ for the $\holo(-\ell)^{\beta_{k,\ell}}$-valued component of $R_k$.

If $R_n \varphi = \dbar w_n$ for some current $w_n$, then one can successively find currents $w_{n-1}, \dots, w_1$ such that (\ref{eq:dbar-xi}) holds since, in view of (\ref{eq:nabla-R-2}),
\[
	\dbar(f_{k+1} w_{k+1} + R_k \varphi) = -f_{k+1} \dbar w_{k+1} + (\dbar R_k) \varphi =
	-(f_{k+1} R_{k+1} - \dbar R_k) \varphi = 0,
\]
and it follows from, e.g., \cite{demailly}*{Theorem 10.7} that
\[
	H^{0,k}(\P^n,E_k \otimes \holo(d)) = 0, \quad
	1 \leq k \leq n-1.
\]
We thus have the following condition for the existence of an interpolant.
\begin{lemma}
\label{lemma:residue-condition}
A global holomorphic section $\varphi$ of $\holo_X$ has an interpolant if and only if $R_n \varphi$ is $\dbar$-exact, i.e., there exists a current $\eta$ such that $R_n \varphi = \dbar \eta$.
\end{lemma}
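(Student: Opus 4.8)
The plan is to obtain the statement directly from the reformulation of Lemma~\ref{lemma:nabla-exact} recorded just after it, the only substantive new ingredient being the solvability of the intermediate $\dbar$-equations on $\P^n$. Recall that, by that reformulation, $\varphi$ has an interpolant precisely when there exist currents $w_1,\dots,w_n$ with $\dbar w_n = R_n \varphi$ and $\dbar w_k = f_{k+1} w_{k+1} + R_k \varphi$ for $1 \le k \le n-1$, in which case $\Phi = f_1 w_1$ is an interpolant. The forward implication is then immediate: if such currents exist, then already the top equation $\dbar w_n = R_n \varphi$ exhibits $R_n \varphi$ as $\dbar$-exact, so one may take $\eta = w_n$.

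For the converse I would argue by descending induction on $k$. Assume $R_n \varphi = \dbar \eta$ and set $w_n := \eta$. Suppose $w_{k+1},\dots,w_n$ have been found so that (\ref{eq:dbar-xi}) holds down to level $k+1$. The source term $f_{k+1} w_{k+1} + R_k \varphi$ is an $E_k \otimes \holo(d)$-valued $(0,k)$-current, and it is $\dbar$-closed: this is exactly the computation displayed before the lemma, which combines the inductive relation $\dbar w_{k+1} = f_{k+2} w_{k+2} + R_{k+1}\varphi$, the vanishing of consecutive differentials $f_{k+1} f_{k+2} = 0$, and the relation $f_{k+1} R_{k+1} - \dbar R_k = 0$ from (\ref{eq:nabla-R-2}). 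Hence the source term defines a class in $H^{0,k}(\P^n, E_k \otimes \holo(d))$, which vanishes for $1 \le k \le n-1$ by \cite{demailly}*{Theorem 10.7}; the class is therefore $\dbar$-exact and we may solve for $w_k$. Carrying the induction down to $k=1$ produces all of $w_1,\dots,w_n$, and then $\Phi = f_1 w_1$ is an interpolant --- it is holomorphic because $\dbar \Phi = -(f_1 R_1)\varphi = 0$ by (\ref{eq:nabla-R-1}), and it interpolates $\varphi$ by Lemma~\ref{lemma:nabla-exact}.

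The one point I expect to require care, and which I regard as the main obstacle, is that the $w_k$ are currents while the vanishing theorem is most naturally phrased for the smooth Dolbeault complex. I would handle this by recalling that the sheaf of currents furnishes a fine resolution of $\holo$ on $\P^n$ in the same way the sheaf of smooth forms does, so that the Dolbeault cohomology $H^{0,k}(\P^n, E_k \otimes \holo(d))$ may equally be computed from the complex of $E_k \otimes \holo(d)$-valued currents. Consequently a $\dbar$-closed current in the middle degrees $1 \le k \le n-1$ is automatically $\dbar$-exact as a current, which is precisely what the induction needs. Beyond this regularity remark the proof is pure bookkeeping of bidegrees together with the relations (\ref{eq:nabla-R-1})--(\ref{eq:nabla-R-3}), all already established.
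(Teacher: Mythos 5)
Your proposal is correct and follows essentially the same route as the paper: the forward direction reads off the top component $\dbar w_n = R_n\varphi$ of the reformulation of Lemma~\ref{lemma:nabla-exact}, and the converse is the same descending induction using the $\dbar$-closedness computation via (\ref{eq:nabla-R-2}) and $f_{k+1}f_{k+2}=0$ together with the vanishing of $H^{0,k}(\P^n, E_k\otimes\holo(d))$ for $1\le k\le n-1$. Your extra remark that this cohomology may be computed with currents (the current sheaves being a fine resolution) is a legitimate point that the paper leaves implicit.
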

\begin{proof}[Proof of Theorem~\ref{thm:moment-conditions}]
By Serre duality we have that $R_{n,\ell} \varphi$ is $\dbar$-exact if and only if
\[
	\int_{\P^n} R_{n,\ell} \varphi \wedge \eta = 0
\]
for all global $\dbar$-closed $\holo(\ell-d)$-valued $(n,0)$-forms $\eta$. Note that each such form is of the form $h \omega$ for some global holomorphic section $h$ of $\holo(\ell-d-n-1)$. Since $R_n \varphi$ is $\dbar$-exact if and only if each component $R_{n,\ell} \varphi$ is, the statement follows from Lemma~\ref{lemma:residue-condition}.
\end{proof}
\begin{proof}[Proof of Corollary~\ref{cor:interpolation-degree}]
Let $d \geq t_n(S_X) - n$, see (\ref{eq:tk-sx}), and let $\varphi$ be a global holomorphic section of $\holo_X(d)$. We have for each $\ell$ that
\[
	\int_{\P^n} R_{n,\ell} \varphi \wedge h \omega = 0
\]	
for all global holomorphic sections $h$ of $\holo(\ell-d-n-1)$. Indeed, if $\ell \leq d+n$, the only such $h$ is the zero section, and if $\ell > d+n$, then $R_{n,\ell} = 0$ since $\beta_{n,\ell} = 0$. Therefore $\varphi$ has an interpolant by Theorem~\ref{thm:moment-conditions}.
\end{proof}
\section{Polynomial interpolation}
\label{section:polynomial-interpolation}
Let us now return to the topic of polynomial interpolation. Recall that the setting is that $X$ is a complex subspace of $\C^n$ such that $X_{\red}$ is a finite set of points. The aim of this section is to give some examples where we explicitly compute the residue current $R$ associated with a Hermitian resolution of $\holo_X$ and write down the moment conditions that Theorem~\ref{thm:moment-conditions} imposes on a function $g \in A_X$ for the existence of an interpolant of degree at most $d$. We do this by identifying $g$ with a global holomorphic section $\varphi$ of $\holo_X(d)$ and use the fact that $g$ has an interpolant of degree at most $d$ if and only if $\varphi$ has an interpolant. More precisely, we let $[z] = [z_0 \,{:}\, \dots \,{:}\, z_n]$ denote homogeneous coordinates on $\P^n$, and we view $\C^n$ as an open complex subspace of $\P^n$ via the embedding $(\zeta_1,\dots,\zeta_n) \mapsto [1 \,{:}\, \zeta_1 \,{:}\, \dots \,{:}\, \zeta_n]$. Recall that on $\C^n$ there is a frame $e$ for $\holo(1)$ such that a global holomorphic section $\Phi$ of $\holo(d)$ is given by
\[
	\Phi(\zeta_1,\dots,\zeta_n) =
	G(\zeta_1,\dots,\zeta_n) e(\zeta_1,\dots,\zeta_n)^{\otimes d},
\]
where $G$ is a polynomial of degree at most $d$ on $\C^n$.

Throughout this section we shall let $\omega$ in Theorem~\ref{thm:moment-conditions} be the nonvanishing holomorphic $\holo(n+1)$-valued $n$-form on $\P^n$ such that
\[
	\omega =
	d \zeta_1 \wedge \dots \wedge d \zeta_n  \otimes e^{\otimes (n+1)}
\]
on $\C^n \subset \P^n$.

Note that the dimension principle gives that $R = R_n$, and throughout this section we write $R_\ell$ rather than $R_{n,\ell}$ for the $\holo(-\ell)^{\beta_{n,\ell}}$-valued component of $R_n$.
\begin{example}
Let $X = \{ (0,0), (1,0), (0,1), (1,1) \} \subset \C^2 \subset \P^2$. We have that $X$ is defined by the homogeneous ideal $I_X = (f_1,f_2)$, where $f_1 = z_1(z_1-z_0)$ and $f_2 = z_2(z_2-z_0)$. A Hermitian resolution of $\holo_X$ is given by the Koszul complex, see Section~\ref{section:ch}, where we interpret $(f_1,f_2)$ as a global holomorphic section of $\holo(2)^2$. Thus the associated residue current takes values in $\holo(-4)$, and is given by the Coleff--Herrera product, see Section~\ref{section:ch},
\[
	R = R_4 =
	\dbar \left[ \frac{1}{\zeta_1(\zeta_1-1)} \right] \wedge
	\dbar \left[ \frac{1}{\zeta_2(\zeta_2-1)} \right] e^{\otimes (-4)}.
\]
By a straightforward computation, cf.\ (\ref{eq:principal-value}), we get
\begin{align*}
	R_4 &=
	\left(
	\dbar \left[ \frac{1}{\zeta_1} \right] \wedge \dbar \left[ \frac{1}{\zeta_2} \right] -
	\dbar \left[ \frac{1}{\zeta_1 - 1} \right] \wedge \dbar \left[ \frac{1}{\zeta_2} \right]
	\right.
	\\ &-
	\left.
	\dbar \left[ \frac{1}{\zeta_1} \right] \wedge \dbar \left[ \frac{1}{\zeta_2 - 1} \right] +
	\dbar \left[ \frac{1}{\zeta_1 - 1} \right] \wedge \dbar \left[ \frac{1}{\zeta_2 - 1} \right]
	\right) e^{\otimes (-4)}.
\end{align*}
By Theorem~\ref{thm:moment-conditions}, we now get the following. Since $R_\ell = 0$ for $\ell \geq 5$, we have that any $g \in A_X$ has an interpolant of degree at most $2$. Moreover, $g$ has an interpolant of degree at most 1 if and only if (\ref{eq:integral-condition}) holds when $\ell = 4$ and $h = 1$. In view of (\ref{eq:dbar-zk}) this amounts to
\begin{equation}
\label{eq:ex1-eq}
	g(0,0) - g(1,0) - g(0,1) + g(1,1) = 0,
\end{equation}
which is expected since the values of $g$ at $(0,0)$, $(1,0)$, and $(0,1)$ uniquely determines a polynomial of degree at most 1 that takes the value $g(1,0) + g(0,1) - g(0,0)$ at $(1,1)$. Note that this gives that the interpolation degree of $X$ is 2.

We have that $g$ has a constant interpolant if and only if (\ref{eq:integral-condition}) holds for all global holomorphic $h$ of $\holo(1)$. By linearity we only need to check $h = z_0, z_1, z_2$, which amounts to (\ref{eq:ex1-eq}), $g(1,1) - g(1,0) = 0$, and $g(1,1) - g(0,1) = 0$. This amounts to
\[
	g(0,0) = g(1,0) = g(0,1) = g(1,1)
\]
as expected.
\qed
\end{example}
\begin{example}
Let $X = \{ (0,0), (1,0), (0,1), (0,2) \} \subset \C^2 \subset \P^2$. We have that $X$ is defined by the homogeneous ideal $I_X = (z_1 a_1, z_1 z_2, z_2 a_2)$, where $a_1 = z_1 - z_0$ and $a_2 = (z_2 - z_0)(z_2-2z_0)$. We have Hermitian resolutions of $\holo_{\P^2} / \isheaf(z_1 a_1, z_2 a_2)$ and $\holo_X$ and a map of complexes:
\[
	\begin{tikzcd}
	\holo(-5) \ar{d}{\psi_2}
	\ar{r}{\delta_2} & \holo(-2) \oplus \holo(-3) \ar{d}{\psi_1} \ar{r}{\delta_1}
	& \holo_{\P^2} \ar{d}{\id} \ar{r} & \holo_{\P^2} / \isheaf(z_1 a_1, z_2 a_2) \ar{d} \\
	\holo(-3) \oplus \holo(-4) \ar{r}{f_2} &
	\holo(-2)^2 \oplus \holo(-3) \ar{r}{f_1} & \holo_{\P^2} \ar{r} & \holo_X
	\end{tikzcd}
\]
where the upper complex is the Koszul complex, see Section~\ref{section:ch}. Moreover,
\[
	f_1 =
	\begin{bmatrix}
	z_1 a_1 & z_1 z_2 & z_2 a_2
	\end{bmatrix}, \quad
	f_2 =
	\begin{bmatrix}
	-z_2 & 0 \\
	a_1 & -a_2 \\
	0 & z_1
	\end{bmatrix},
\]
and
\[
	\psi_1 =
	\begin{bmatrix}
	1 & 0 \\
	0 & 0 \\
	0 & 1
	\end{bmatrix}, \quad
	\psi_2 =
	\begin{bmatrix}
	a_2 \\
	a_1
	\end{bmatrix}.
\]
Let $R$ and $R'$ denote the residue currents associated with the resolutions of $\holo_X$ and $\holo/\isheaf(z_1 a_1,z_2 a_2)$, respectively. We have that $R' = R'_5$ takes values in $\holo(-5)$ and is given by the Coleff--Herrera product, see Section~\ref{section:ch},
\[
	R'_5 =
	\dbar \left[ \frac{1}{\zeta_1(\zeta_1-1)} \right] \wedge
	\dbar \left[ \frac{1}{\zeta_2(\zeta_2-1)(\zeta_2-2)} \right] e^{\otimes (-5)}.
\]
Thus by the comparison formula, see Section~\ref{section:comparison-formula}, $R = \psi_2 R' = R_3 \oplus R_4$. A straightforward computation gives that
\begin{align*}
	R_3 &=
	\dbar \left[ \frac{1}{\zeta_1(\zeta_1-1)} \right] \wedge
	\dbar \left[ \frac{1}{\zeta_2} \right] e^{\otimes (-3)} \\ &=
	\left(
	-\dbar \left[ \frac{1}{\zeta_1} \right] \wedge \dbar \left[ \frac{1}{\zeta_2} \right] +
	\dbar \left[ \frac{1}{\zeta_1 - 1} \right] \wedge \dbar \left[ \frac{1}{\zeta_2} \right]
	\right) e^{\otimes (-3)},
\end{align*}
and
\begin{align*}
	R_4 &=
	\dbar \left[ \frac{1}{\zeta_1} \right]
	\wedge \dbar \left[ \frac{1}{\zeta_2(\zeta_2-1)(\zeta_2-2)} \right] e^{\otimes (-4)} \\ &=
	\left(
	\frac{1}{2} \dbar \left[ \frac{1}{\zeta_1} \right] \wedge
	\dbar \left[ \frac{1}{\zeta_2} \right] -
	\dbar \left[ \frac{1}{\zeta_1} \right] \wedge \dbar \left[ \frac{1}{\zeta_2 - 1} \right] +
	\frac{1}{2} \dbar \left[ \frac{1}{\zeta_1} \right] \wedge
	\dbar \left[ \frac{1}{\zeta_2 - 2} \right]
	\right) e^{\otimes (-4)}.
\end{align*}

By Theorem~\ref{thm:moment-conditions}, we now get the following. Since $R_\ell = 0$ for $\ell \geq 5$, we have that any $g \in A_X$ has an interpolant of degree at most $2$. Moreover, $g$ has an interpolant of degree at most 1 if and only if (\ref{eq:integral-condition}) holds when $\ell = 4$ and $h = 1$. (Note that there is no condition involving $R_3$ since $\ell - d - n -1 < 0$ in this case.) In view of (\ref{eq:dbar-zk}) we get the condition
\begin{equation}
\label{eq:ex2-eq}
	\frac{1}{2} g(0,0) - g(0,1) + \frac{1}{2} g(0,2) = 0,
\end{equation}
which is expected since $g$ has an interpolant of degree at most 1 if and only if $g(0,1)$ is the average of $g(0,0)$ and $g(0,2)$. Note that this gives that the interpolation degree of $X$ is 2.

We get that $g$ has a constant interpolant if and only if (\ref{eq:integral-condition}) holds when $\ell = 4$ for $h = z_0, z_1, z_2$, and when $\ell = 3$ and $h = 1$. Since $z_1$ vanishes on the support of $R_4$, this amounts to the equations (\ref{eq:ex2-eq}), $g(0,2) - g(0,1) = 0$ and $g(1,0) - g(0,0) = 0$. This amounts to
\[
	g(0,0) = g(1,0) = g(0,1) = g(0,2)
\]
as expected.
\qed
\end{example}
We end this note by considering Hermite interpolation. We refer to, e.g., \cites{calvi,spitz,traub}, and references therein for a classical survey of this topic.
\begin{example}
\label{ex:hermite}
Let $p_0, \dots, p_r \in \C$, and let $g$ be a holomorphic function on the complex subspace $X \subset \C$ defined by the ideal generated by $\prod_{j=0}^r (\zeta-p_j)$. Here we allow for the possibility that $p_i = p_j$ for some $i,j$, so that $X$ is nonreduced in general, and we denote the number of times that $p_j$ occurs by $m_j$. We have that a polynomial $G$ interpolates $g$ if and only if, for each $j = 0,\dots,r$,
\[
	G^{(k)}(p_j) = g^{(k)}(p_j), \quad
	k = 0,\dots,m_j-1.
\]

We say that a polynomial interpolates $g$ with respect to $p_0,\dots,p_k$, $k \leq r$, if it interpolates the pull-back of $g$ to the complex subspace defined by the ideal generated by $\prod_{j=0}^k (\zeta-p_j)$. We denote the unique polynomial that interpolates $g$ with respect to $p_0,\dots,p_k$ by $H[g;p_0,\dots,p_k]$. The coefficient of its $\zeta^k$-term is referred to as the $k$th \emph{divided difference} of $g$ and we denote it by $g[p_0,\dots,p_k]$. By induction it is not difficult to see that
\[
	H[g;p_0,\dots,p_r](\zeta) =
	\sum_{k=0}^r g[p_0,\dots,p_k]
	\prod_{j=0}^{k-1} (\zeta-p_j),
\]
see \cite{calvi}*{Theorem~1.8}. This is referred to as Newton's formula.

We claim that $g \in A_X$ has an interpolant of degree at most $d$ if and only if $(gh)[p_0,\dots,p_r] = 0$ for all polynomials $h$ of degree at most $r-d-1$. Let us show how this condition follows from Theorem~\ref{thm:moment-conditions}. Since the ideal is generated by a single element, we have that the associated residue current is given by
\[
	R_{r+1} =
	\dbar \left[ \frac{1}{\prod_{j=0}^r (\zeta-p_j)} \right] e^{\otimes (-r-1)}.
\]
Theorem~\ref{thm:moment-conditions} together with Stokes' formula gives that $g$ has an interpolant if and only if
\[
	\int_{\C}
	\dbar \left[ \frac{1}{\prod_{j=0}^r (\zeta-p_j)} \right] g h
	\wedge d \zeta =
	\int_{C_R}
	\frac{H[g h;p_0,\dots,p_r](\zeta)}{\prod_{j=0}^r (\zeta-p_j)} \, d \zeta = 0
\]
for all polynomials $h$ of degree at most $r-d-1$, where $C_R$ is a circle of radius $R \gg 0$. Here we have used the fact that $H[g h;p_0,\dots,p_r]$ interpolates $g h$. By letting $R \rightarrow \infty$, a direct calculation gives that the second integral is equal to $2 \pi i \cdot (g h)[p_0,\dots,p_r]$, and hence the claim follows.
\end{example}
\section*{Acknowledgments}
I would like to thank Elizabeth Wulcan and Mats Andersson for helpful discussions and comments on preliminary versions of this note.
\begin{bibdiv}
\begin{biblist}
\bib{mats}{article}{
   author={Andersson, M.},
   title={Residue currents and ideals of holomorphic functions},
   journal={Bull. Sci. Math.},
   volume={128},
   date={2004},
   number={6},
   pages={481--512},
   issn={0007-4497},
}
\bib{aw}{article}{
   author={Andersson, M.},
   author={Wulcan, E.},
   title={Residue currents with prescribed annihilator ideals},
   journal={Ann. Sci. \'{E}cole Norm. Sup. (4)},
   volume={40},
   date={2007},
   number={6},
   pages={985--1007},
   issn={0012-9593},
}
\bib{aw2}{article}{
   author={Andersson, M.},
   author={Wulcan, E.},
   title={Global effective versions of the Brian\c{c}on-Skoda-Huneke theorem},
   journal={Invent. Math.},
   volume={200},
   date={2015},
   number={2},
   pages={607--651},
   issn={0020-9910},
}
\bib{calvi}{book}{
	author = {Calvi, J. -P.},
	title = {Lectures on multivariate polynomial interpolation},
	note = {available at \url{https://www.math.univ-toulouse.fr/~calvi/res_fichiers/MPI.pdf}},
}
\bib{ch}{book}{
   author={Coleff, N. R.},
   author={Herrera, M. E.},
   title={Les courants r\'{e}siduels associ\'{e}s \`a une forme m\'{e}romorphe},
   series={Lecture Notes in Mathematics},
   volume={633},
   publisher={Springer, Berlin},
   date={1978},
   pages={x+211},
   isbn={3-540-08651-X},
}
\bib{demailly}{book}{
	author = {Demailly, J. -P.},
	title = {Complex analytic and differential geometry},
	note = {available at \url{https://www-fourier.ujf-grenoble.fr/~demailly/manuscripts/agbook.pdf}},
}
\bib{eisenbud}{book}{
   author={Eisenbud, D.},
   title={The geometry of syzygies},
   series={Graduate Texts in Mathematics},
   volume={229},
   subtitle={A second course in commutative algebra and algebraic geometry},
   publisher={Springer-Verlag, New York},
   date={2005},
   pages={xvi+243},
   isbn={0-387-22215-4},
}
\bib{hl}{article}{
   author={Herrera, M.},
   author={Lieberman, D.},
   title={Residues and principal values on complex spaces},
   journal={Math. Ann.},
   volume={194},
   date={1971},
   pages={259--294},
   issn={0025-5831},
}
\bib{johansson}{thesis}{
	author = {Johansson, J.},
	title = {Polynomial interpolation and residue currents},
	type = {Licentiate thesis},
	school = {University of Gothenburg},
	year = {2020},
}
\bib{larkang}{article}{
   author={L\"{a}rk\"{a}ng, R.},
   title={A comparison formula for residue currents},
   journal={Math. Scand.},
   volume={125},
   date={2019},
   number={1},
   pages={39--66},
   issn={0025-5521},
}
\bib{spitz}{article}{
   author={Spitzbart, A.},
   title={A generalization of Hermite's interpolation formula},
   journal={Amer. Math. Monthly},
   volume={67},
   date={1960},
   pages={42--46},
   issn={0002-9890},
}
\bib{standar}{thesis}{
	author = {Standar, C.},
	title = {Interpolation of polynomials in $\C^n$ with degree estimates},
	type = {Master's thesis},
	school = {University of Gothenburg},
	year = {2012},
}
\bib{traub}{article}{
   author={Traub, J. F.},
   title={On Lagrange-Hermite interpolation},
   journal={J. Soc. Indust. Appl. Math.},
   volume={12},
   date={1964},
   pages={886--891},
}
\end{biblist}
\end{bibdiv}
\end{document}